\newtheorem{Theorem}{Theorem}[section]
\newtheorem{Corollary}[Theorem]{Corollary}
\newtheorem{Lemma}[Theorem]{Lemma}
\newtheorem{Proposition}[Theorem]{Proposition}
\theoremstyle{definition}
\numberwithin{equation}{section}
\DeclareMathAlphabet\mathbb{U}{msb}{m}{n}
\newcommand{\mono}{\rightarrowtail}
\newcommand{\epi}{\twoheadrightarrow}
\def\QQ{{\mathbb Q}}
\def\ZZ{{\mathbb Z}}
\def\DL{{\mathcal D\mathcal L}}
\def\xyma{\xymatrix@M.7em}
\begin{document}

\title{On discrete homology of a free pro-$p$-group}
\author{Sergei O. Ivanov}
\address{Chebyshev Laboratory, St. Petersburg State University, 14th Line, 29b,
Saint Petersburg, 199178 Russia} \email{ivanov.s.o.1986@gmail.com}

\author{Roman Mikhailov}
\address{Chebyshev Laboratory, St. Petersburg State University, 14th Line, 29b,
Saint Petersburg, 199178 Russia and St. Petersburg Department of
Steklov Mathematical Institute} \email{rmikhailov@mail.ru}

\begin{abstract} For a prime $p$, let $\hat F_p$ be a finitely generated free pro-$p$-group of rank $\geq 2$. We show that the second discrete homology group $H_2(\hat F_p,\mathbb Z/p)$ is an uncountable $\mathbb Z/p$-vector space. This answers a problem of A.K. Bousfield.
\end{abstract}
\maketitle

\section{Introduction}

Let $p$ be a prime.
For a profinite group $G$, there is a natural comparison map
$$H_2^{\sf disc}(G,\ZZ/p)\to H_2^{\sf cont}(G,\ZZ/p),$$
which connects discrete and continuous homology groups of $G$. Here $H_2^{\sf disc}(G,\ZZ/p)=H_2(G, \ZZ/p)$ is the second homology group of $G$ with $\ZZ/p$-coefficients, where $G$ is viewed as a discrete group. The continuous homology $H_2^{\sf cont}(G,\ZZ/p)$ can be defined as the inverse limit $\varprojlim H_2(G/U, \ZZ/p),$ where $U$ runs over all open normal subgroups of $G$. The above comparison map $H_2^{\sf disc}\to H_2^{\sf cont}$ is the inverse limit of the coinflation maps $H_2(G,\ZZ/p)\to H_2(G/U, \ZZ/p)$ (see Theorem 2.1 \cite{FKRS}).

The study of the comparison map for different types of pro-$p$-groups is a fundamental problem in the theory of profinite groups (see \cite{FKRS} for discussion and references). 
It is well known that for a finitely generated free pro-$p$-group $\hat F_p$, 
$$H_2^{{\sf cont}}(\hat F_p,\mathbb Z/p)=0.$$ 
A.K. Bousfield posed the following question in~\cite{Bousfield77}, 
Problem 4.11 (case $R=\mathbb Z/n$):\\ \\
{\bf Problem.}\ {\em (Bousfield)}  Does $H_2^{\sf disc}(\hat F_n, \mathbb Z/n)$ vanish when $F$ is a finitely generated free group?\\

\noindent Here $\hat F_n$ is the $\ZZ/n$-completion of $F$, which is isomorphic to the product of pro-$p$-completions $\hat F_p$ over prime factors of $n$ (see Prop. 12.3 \cite{Bousfield77}). That is, the above problem is completely reduced to the case of homology groups $H_2^{\sf disc}(\hat F_p, \mathbb Z/p)$ for primes $p$ and since $H_2^{\sf cont}(\hat F_p,\mathbb Z/p)=0$, the problem becomes a question about non-triviality of the kernel of the comparison map for $\hat F_p$.

In \cite{Bousfield92}, A.K. Bousfield proved that, for a finitely generated free pro-$p$-group $\hat F_p$ on at least two generators, the group $H_i^{\sf disc}(\hat F_p,\mathbb Z/p)$ is uncountable for $i=2$ or $i=3$, or both. In particular, the wedge of two circles $S^1\vee S^1$ is a $\mathbb Z/p$-bad space in the sense of Bousfield-Kan.

The group $H_2^{\sf disc}(\hat F_p, \ZZ/p)$ plays a central role in the theory of $H\ZZ/p$-localizations developed in \cite{Bousfield77}. It follows immediately from the definition of $H\ZZ/p$-localization that, for a free group $F$, $H_2^{\sf disc}(\hat F_p, \ZZ/p)=0$ if and only if $\hat F_p$ coincides with the $H\ZZ/p$-localization of $F$.

In this paper we answer Bousfield's problem over $\ZZ/p$. Our main result is the following

\vspace{.5cm}\noindent{\bf Main Theorem.} For a finitely generated free pro-$p$-group $\hat F_p$ of rank $\geq 2$, $H_2^{\sf disc}(\hat F_p,\mathbb Z/p)$ is uncountable.

\vspace{.25cm}

There are two cases in the problem of Bousfield, $R=\ZZ/n$ and $R=\QQ.$ We give the answer for the case of $R=\ZZ/n$. The problem over $\QQ$ is still open: we do not know whether $H_2^{\sf disc}(\hat F_\QQ,\QQ)$ vanishes or not.

The proof is organized as follows.

In Section 2 we consider properties of discrete and continuous homology of profinite groups. Using a result of Nikolov and Segal {\cite[Th 1.4]{NikolovSegal}}, we show that for a finitely generated profinite group $G$ and a closed normal subgroup $H$ the cokernels of the maps $H_2^{\sf disc}(G,\ZZ/p)\to H_2^{\sf disc}(G/H,\ZZ/p)$ and $H_2^{\sf cont}(G,\ZZ/p)\to H_2^{\sf cont}(G/H,\ZZ/p)$ coincide (Theorem \ref{theorem_cokernels}):
$$ \xymatrix{
H_2^{\sf disc}(G,\ZZ/p)\ar[r] \ar[d]^{\varphi} & H_2^{\sf disc}(G/H,\ZZ/p)\ar[r] \ar[d]^{\varphi} & Q^{\sf disc}\ar[r] \ar[d]^{\cong} & 0
\\
H_2^{\sf cont}(G,\ZZ/p)\ar[r] & H_2^{\sf cont}(G/H,\ZZ/p)\ar[r] & Q^{\sf cont}\ar[r] & 0.
}$$
As a corollary we obtain (Corollary \ref{corollary_main}) that, for a finitely generated free pro-$p$-group $\hat F_p$, a continuous epimorphism $\pi:\hat F_p\epi G$ to a pro-$p$-group induces the exact sequence
\begin{equation}\label{exseq1} H_2^{\sf disc}(\hat F_p,\ZZ/p)\overset{\pi_*}\longrightarrow H_2^{\sf disc}(G,\ZZ/p)\overset{\varphi}\longrightarrow H^{\sf cont}_2(G,\ZZ/p)\longrightarrow 0.\end{equation} That is, for proving that, for a free group $F$, $H_2(\hat F_p, \ZZ/p)\neq 0$, it is enough to find a discrete epimorphism $F\twoheadrightarrow G$ such that, the comparison map of the second homology groups of the pro-$p$-completion of $G$ has a nonzero kernel. Observe that, the statements in Section 2 significantly use the theory of profinite groups and there is no direct way to generalize them for pronilpotent groups. In particular, we don't see how to prove that, $H_2(\hat F_{\ZZ}, \mathbb Z/p)\neq 0$, where $\hat F_{\ZZ}$ is the pronilpotent completion of $F$.

Section 3 follows the ideas of Bousfield from \cite{Bousfield92}. Consider the ring of formal power series $\ZZ/p[\![x]\!]$, and the infinite cyclic group $C:=\langle t\rangle$. We will use the multiplicative notation of the $p$-adic integers $C\otimes \mathbb Z_p=\{t^\alpha,\ \alpha\in \mathbb Z_p\}$. Consider the continuous multiplicative homomorphism $\tau: C\otimes \mathbb Z_p\to \ZZ/p[\![x]\!]$ sending $t$ to $1-x$. The main result of Section 3 is Proposition \ref{proposition_kernel_uncountable}, which claims that the kernel of the multiplication map
\begin{equation}\label{multmap1}\ZZ/p[\![x]\!]\otimes_{\ZZ/p[C\otimes \ZZ_p]} \ZZ/p[\![x]\!] \longrightarrow \ZZ/p[\![x]\!]\end{equation}
 is uncountable.

Our main example is based on the $p$-lamplighter group $\mathbb Z/p\wr \mathbb Z$, a finitely generated but not finitely presented group, which plays a central role in the theory of metabelian groups. The homological properties of the $p$-lamplighter group are considered in \cite{Kropholler}. The profinite completion of the $p$-lamplighter group is considered in \cite{GK}, it is shown there that it is a semi-similar group generated by finite automaton. We consider the
{\it double lamplighter group}:
$$(\ZZ/p)^2 \wr \ZZ =\langle a,b,c \mid [b,b^{a^i}]=[c,c^{a^i}]=[b,c^{a^i}]=b^p=c^p=1 , \ \  i\in \ZZ \rangle,$$
Denote by $\DL$ the pro-$p$-completion of the double lamplighter group. It follows from direct computations of homology groups, that there is a diagram (in the above notations)
$$
\xyma{
\ZZ/p[\![x]\!]\otimes_{\ZZ/p[C\otimes \ZZ_p]} \ZZ/p[\![x]\!]\ar@{->}[r]\ar@{>->}[d]^\oplus & \ZZ/p[\![x]\!]\ar[d]
\\
H_2^{\sf disc}(\DL,\ZZ/p) \ar@{->}[r] & H_2^{\sf cont}(\DL,\ZZ/p),
}
$$
where the left vertical arrow is a split monomorphism and the upper horisontal map is the multiplication map (see proof of Theorem \ref{theorem_DL}). This implies that, for the group $\DL$, the comparison map $ H_2^{\sf disc}(\DL,\ZZ/p) \to H_2^{\sf cont}(\DL,\ZZ/p)$ has an uncountable kernel. Since the double lamplighter group is 3-generated, the sequence (\ref{exseq1}) implies that, for a free group $F$ with at least three generators, $H_2(\hat F_p,\mathbb Z/p)$ is uncountable. Finally, we use Lemma 11.2 of \cite{Bousfield92} to get the same result for a two generated free group $F$.

In \cite{Bousfield77}, A.K. Bousfield formulated the following generalization of the above problem for the class of finitely presented groups (see Problem 4.10 \cite{Bousfield77}, the case $R=\mathbb Z/n$). Let $G$ be a finitely presented group, is it true that $H\mathbb Z/p$-localization of $G$ equals to its pro-$p$-completion $\hat G_p$? [The Problem is formulated for $H\mathbb Z/n$-localization, but it is reduced to the case of a prime $n=p$.] It follows immediately from the definition of $H\mathbb Z/p$-localization that, this problem can be reformulated as follows: is it true that, for a finitely presented group $G$, the natural homomorphism $H_2(G, \mathbb Z/p)\to H_2(\hat G_p, \mathbb Z/p).$ It is shown in \cite{Bousfield77} that this is true for the class of poly-cyclic groups. The same is true for finitely presented metabelian groups \cite{IvanovMikhailov}. Main Theorem of the present paper implies that, for any finitely presented group $P$, which maps epimorphically onto the double lamplighter group, the natural map $H_2(P, \mathbb Z/p)\to H_2(\hat P_p, \mathbb Z/p)$ has an uncountable cokernel.

\section{Discrete and continuous homology of profinite groups}

For a profinite group $G$ and a normal subgroup $H$, denote by $\overline{H}$ the closure of $H$ in $G$ in profinite topology.

\begin{Theorem}[{\cite[Th 1.4]{NikolovSegal}}]\label{NikolovSegal} Let $G$ be a finitely generated profinite group and $H$ be a closed normal subgroup of $G.$ Then the subgroup $[H,G]$ is closed in $G.$
\end{Theorem}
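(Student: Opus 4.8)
The plan is to deduce closedness of $[H,G]$ from a \emph{uniform bound on commutator width} in the finite continuous quotients of $G$, and then to invoke compactness. Write $d$ for the number of topological generators of $G$. The heart of the matter is the purely finite statement that there is an integer $m=f(d)$, depending only on $d$, such that in \emph{every} finite quotient $\bar G=G/N$ (with $N$ open normal) and for the image $\bar H$ of $H$, each element of $[\bar H,\bar G]$ is a product of at most $m$ commutators $[\bar h,\bar g]$ with $\bar h\in\bar H$, $\bar g\in\bar G$. Granting this, the topological conclusion is a soft compactness argument, which I carry out first.

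Assume the uniform width bound. Consider the continuous map
$$\Phi\colon (H\times G)^m\longrightarrow G,\qquad (h_1,g_1,\dots,h_m,g_m)\longmapsto \prod_{i=1}^m[h_i,g_i].$$
Since $H$ and $G$ are compact, $(H\times G)^m$ is compact, so its image $K:=\Phi\big((H\times G)^m\big)$ is compact, hence closed in $G$; clearly $K\subseteq [H,G]$. For the reverse inclusion, fix $x\in[H,G]$ and an open normal subgroup $N$. The image of $x$ in $G/N$ lies in $[\bar H,\bar G]$, so by the uniform bound it is a product of at most $m$ commutators of elements of $\bar H$ and $\bar G$ (padding with trivial commutators if fewer than $m$ are needed). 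Lifting these factors along the surjections $H\epi\bar H$ and $G\epi\bar G$ produces $k\in K$ with the same image as $x$ in $G/N$, so $x\in kN\subseteq KN$. As $N$ was arbitrary, $x\in\bigcap_N KN=\overline K=K$, using that the closure of any subset of a profinite group is the intersection of its products with open normal subgroups and that $K$ is already closed. Hence $[H,G]\subseteq K$, so $[H,G]=K$ is closed.

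The main obstacle is the uniform width bound itself, which is the deep content of Nikolov--Segal and is where finite group theory, including the classification of finite simple groups, is unavoidable. I would attack it by induction along a chief series of the finite group $\bar G$, reducing the estimate for $[\bar H,\bar G]$ to the case in which the relevant chief factor is either abelian or a direct power $S^k$ of a nonabelian simple group. The abelian factors are controlled by representation-theoretic arguments for $d$-generated modules, while the crucial and most delicate case is the semisimple factor $S^k$: here one needs a bound, \emph{independent of $k$}, on the number of commutators required to express the relevant elements, resting on width estimates in finite quasisimple groups. Keeping the bound independent of $k$, and hence of the quotient $\bar G$, is the genuinely hard point; it is the analogue, in this normal-subgroup setting, of the bounded commutator-width results that form the technical core of Nikolov--Segal's proof that finite-index subgroups of finitely generated profinite groups are open.
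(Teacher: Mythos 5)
This statement is not proved in the paper at all: it is quoted verbatim from Nikolov--Segal (Theorem 1.4 of their paper) and used as a black box, its only role being to feed the corollary that $[H,G]H^p$ is closed and, through it, Theorem \ref{theorem_cokernels}. So there is no proof in the paper to compare yours against; the relevant comparison is with Nikolov--Segal's own argument, whose structure your proposal in fact mirrors.

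Your compactness reduction is correct and complete, and it is indeed the standard mechanism: granting an integer $m=f(d)$ bounding the width of $[\bar H,\bar G]$ in every finite continuous quotient of every $d$-generated profinite group, the set $K$ of products of $m$ commutators $[h,g]$ is the continuous image of the compact set $(H\times G)^m$, hence closed, and your lifting argument showing $[H,G]\subseteq \bigcap_N KN = K$ is sound (a closed subset of a profinite group does equal the intersection of its thickenings $KN$ over open normal $N$). This is exactly how the topological statement is deduced from the finite one in Nikolov--Segal. The genuine gap is that the finite statement --- the existence of $f(d)$ --- is not proved but only described: induction along a chief series, separate treatment of abelian and semisimple chief factors, and bounds for powers $S^k$ of a simple group that are independent of $k$. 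That description is accurate as a roadmap, but each of those steps is a substantial theorem resting on the classification of finite simple groups and on quantitative generation results, and none of them is carried out. So what you have is a correct reduction of the theorem to the key uniform-width theorem of Nikolov--Segal, together with an honest acknowledgement of where the depth lies, not a proof; as a blind attempt it therefore has a real (if unavoidable) gap --- precisely the gap the paper fills by citation rather than by argument.
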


\begin{Corollary}\label{corollary_[H,G]H^p} Let $G$ be a finitely generated profinite group and $H$ be a closed normal subgroup of $G.$ Then the subgroup $[H,G]\cdot H^p$ is closed in $G.$
\end{Corollary}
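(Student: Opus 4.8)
The plan is to pass to the quotient of $G$ by $[H,G]$, which is closed by Theorem \ref{NikolovSegal}, and to exploit the fact that the image of $H$ in this quotient is abelian, so that taking $p$-th powers becomes a compactness argument.

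First I would invoke Theorem \ref{NikolovSegal} to conclude that $N:=[H,G]$ is a closed normal subgroup of $G$, so that $\bar G:=G/N$ is again a profinite group. Let $\pi:G\epi \bar G$ be the quotient map and set $\bar H:=\pi(H)$. Since $H$ is closed in the compact group $G$ it is compact, hence $\bar H$ is a compact, and therefore closed, subgroup of $\bar G$. Moreover $[H,H]\subseteq [H,G]=N$, so $\bar H$ is abelian.

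The key observation is that in the abelian group $\bar H$ the set of $p$-th powers $\{\bar h^p : \bar h\in \bar H\}$ is already a subgroup, closure under products following from $\bar h_1^p\,\bar h_2^p=(\bar h_1\bar h_2)^p$, and it coincides with $\bar H^p$. This subgroup is the image of $\bar H$ under the continuous map $\bar h\mapsto \bar h^p$; being the continuous image of a compact space it is compact, and hence closed in the Hausdorff space $\bar G$.

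Finally I would identify $\pi^{-1}(\bar H^p)$ with $[H,G]\cdot H^p$. Indeed $\pi(H^p)=\bar H^p$, because $\pi$ carries the generating $p$-th powers of $H$ onto those of $\bar H$, and for any subgroup $S\le G$ one has $\pi^{-1}(\pi(S))=S\cdot\ker\pi$; taking $S=H^p$ gives $\pi^{-1}(\bar H^p)=H^p\cdot N=[H,G]\cdot H^p$. As $\bar H^p$ is closed and $\pi$ is continuous, its preimage $[H,G]\cdot H^p$ is closed in $G$. The only genuinely nontrivial input here is Theorem \ref{NikolovSegal}; once $[H,G]$ is known to be closed, everything reduces to the elementary fact that the continuous image of a compact group is compact, hence closed, so I do not expect any further obstacle.
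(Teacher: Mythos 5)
Your proposal is correct and follows essentially the same route as the paper: invoke Theorem \ref{NikolovSegal} to see that $[H,G]$ is closed, observe that the $p$-power map on the abelian profinite quotient $H/[H,G]$ is a continuous homomorphism whose image $([H,G]\cdot H^p)/[H,G]$ is compact hence closed, and pull back along the continuous quotient map. The only cosmetic difference is that you phrase the argument via $\bar H=\pi(H)\subseteq G/[H,G]$ rather than working with $H/[H,G]$ directly, which is the same group since $[H,G]\subseteq H$.
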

\begin{proof}
Consider the abelian profinite group $H/[H,G].$  Then the  $p$-power map  $H/[H,G] \to H/[H,G] $ is continuous and its image is equal to $ ([H,G]\cdot H^p)/[H,G].$ Hence
$([H,G]\cdot H^p)/[H,G]$ is a closed subgroup of $ H/[H,G].$ Using that the preimage of a closed set under continuous function is closed we obtain that $[H,G]\cdot H^p$ is closed.
\end{proof}

\begin{Lemma}[mod-$p$ Hopf's formula]\label{lemma_hopf} Let $G$ be a (discrete) group and $H$ be its normal subgroup. Then there is a natural exact sequence
$$H_2(G,\ZZ/p) \longrightarrow H_2(G/H,\ZZ/p) \longrightarrow \frac{H\cap ([G,G] G^p)}{[H,G] H^p} \longrightarrow 0.$$
\end{Lemma}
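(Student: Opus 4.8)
The plan is to deduce the statement from the mod-$p$ five-term exact sequence attached to the extension $1\to H\to G\to G/H\to 1$. First I would invoke the Lyndon--Hochschild--Serre spectral sequence $E^2_{st}=H_s(G/H,H_t(H,\ZZ/p))\Rightarrow H_{s+t}(G,\ZZ/p)$ for the trivial coefficient module $\ZZ/p$. Its exact sequence of terms of low degree reads
$$H_2(G,\ZZ/p)\to H_2(G/H,\ZZ/p)\overset{\partial}{\to} H_0(G/H,H_1(H,\ZZ/p))\to H_1(G,\ZZ/p)\to H_1(G/H,\ZZ/p)\to 0,$$
where $\partial$ is the differential $d_2\colon E^2_{2,0}\to E^2_{0,1}$.

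Next I would make the middle term explicit. Because $\ZZ/p$ is trivial, the universal coefficient theorem gives $H_1(H,\ZZ/p)=H_{\mathrm{ab}}\otimes\ZZ/p$, equipped with the conjugation action of $G/H$. Since both $-\otimes\ZZ/p$ and the coinvariants functor $(-)_{G/H}$ are right exact, and the coinvariants of $H_{\mathrm{ab}}$ under conjugation are $H/[H,G]$, tensoring with $\ZZ/p$ yields $H_0(G/H,H_1(H,\ZZ/p))=H/([H,G]H^p)$. Under this identification the edge map to $H_1(G,\ZZ/p)=G/([G,G]G^p)$ is the one induced by the inclusion $H\hookrightarrow G$, i.e.\ the evident projection $H/([H,G]H^p)\to G/([G,G]G^p)$; its kernel is exactly $(H\cap[G,G]G^p)/([H,G]H^p)$.

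Finally I would extract the desired three-term sequence. By exactness of the five-term sequence at $H_0(G/H,H_1(H,\ZZ/p))$, the image of $\partial$ equals the kernel just computed, while exactness at $H_2(G/H,\ZZ/p)$ identifies $\ker\partial$ with the image of $H_2(G,\ZZ/p)\to H_2(G/H,\ZZ/p)$. Hence $\partial$ induces an epimorphism $H_2(G/H,\ZZ/p)\epi (H\cap[G,G]G^p)/([H,G]H^p)$ whose kernel is precisely that image, which is the asserted exact sequence; naturality in the pair $(G,H)$ is inherited from that of the spectral sequence.

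I expect the only real work to be the identification in the second step: checking that the abstract coinvariant module is canonically $H/([H,G]H^p)$ and that the edge map is the evident projection, which requires tracking the conjugation action carefully through the tensor product with $\ZZ/p$. Once this is in place the remainder is formal. An alternative, spectral-sequence-free route would start from a free presentation $G=F/R$, write $G/H=F/S$ with $R\le S$, and mimic the classical Hopf-formula computation reduced modulo $p$; but routing everything through the five-term sequence keeps the bookkeeping shortest.
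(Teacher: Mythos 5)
Your proposal is correct and follows essentially the same route as the paper: the paper also deduces the lemma from the mod-$p$ five-term exact sequence together with the identifications $H_1(H,\ZZ/p)_G=H/([H,G]H^p)$ and $H_1(G,\ZZ/p)=G/([G,G]G^p)$. The extra care you take with the coinvariants identification and the edge map is exactly the content the paper leaves implicit.
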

\begin{proof}
It follows from the five term exact sequence
$$ H_2(G,\ZZ/p) \longrightarrow H_2(G/H,\ZZ/p) \longrightarrow H_1(H,\ZZ/p)_G \longrightarrow H_1(G,\ZZ/p)$$
and the equations $H_1(H,\ZZ/p)_G=H/([H,G]H^p)$ and $H_1(G,\ZZ/p)=G/([G,G]G^p).$
\end{proof}

\begin{Lemma}[profinite mod-$p$ Hopf's formula]\label{lemma_profinite_hopf} Let $G$ be a profinite group and $H$ be its closed normal subgroup. Then there is a natural exact sequence
$$H_2^{\sf cont}(G,\ZZ/p) \longrightarrow H_2^{\sf cont}(G/H,\ZZ/p) \longrightarrow \frac{H\cap \overline{([G,G] G^p)}}{\overline{([H,G] H^p)}} \longrightarrow 0.$$
\end{Lemma}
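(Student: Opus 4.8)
The plan is to obtain the sequence as the inverse limit, over the open normal subgroups $U$ of $G$, of the discrete mod-$p$ Hopf sequences (Lemma \ref{lemma_hopf}) applied to the finite quotients $G/U$. First I would fix an open normal subgroup $U$ of $G$ and note that $HU$ is again open (it contains $U$) and normal, so $G/HU\cong (G/U)/(HU/U)$ is a finite quotient of $G$. Applying Lemma \ref{lemma_hopf} to the finite group $G/U$ and its normal subgroup $HU/U$ yields a natural exact sequence
$$H_2(G/U,\ZZ/p)\to H_2(G/HU,\ZZ/p)\to \frac{(HU/U)\cap [G/U,G/U](G/U)^p}{[HU/U,G/U](HU/U)^p}\to 0.$$
As $U$ ranges over all open normal subgroups of $G$, the subgroups $HU$ range over \emph{all} open normal subgroups of $G$ containing $H$ (one may take $U=W$ to realize any such $W$), so the family $\{G/HU\}_U$ is cofinal among the finite quotients of $G/H$; hence $\varprojlim_U H_2(G/HU,\ZZ/p)=H_2^{\sf cont}(G/H,\ZZ/p)$, while $\varprojlim_U H_2(G/U,\ZZ/p)=H_2^{\sf cont}(G,\ZZ/p)$ by definition.

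Next I would rewrite the third term as a subquotient of $G/U$. Writing $\pi_U\colon G\to G/U$ for the projection, the numerator equals $\pi_U(HU\cap [G,G]G^pU)$. For the denominator, using $[HU,G]U=[H,G]U$ (because $[U,G]\subseteq U$ and conjugation by elements of $U$ is trivial modulo $U$) together with $(HU)^pU=H^pU$ (every $p$-th power $(hu)^p$ with $u\in U$ lies in $H^pU$ since $U$ is normal), one gets $[HU,G](HU)^p\,U=[H,G]H^p\,U$, so the denominator equals $\pi_U([H,G]H^pU)$. Thus the Hopf sequence for $G/U$ reads
$$H_2(G/U,\ZZ/p)\to H_2(G/HU,\ZZ/p)\to \frac{\pi_U(HU\cap [G,G]G^pU)}{\pi_U([H,G]H^pU)}\to 0.$$

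Finally I would pass to the inverse limit over $U$. Since every group appearing is a finite $\ZZ/p$-vector space, the inverse systems satisfy the Mittag--Leffler condition, the relevant $\varprojlim^1$ vanish, and $\varprojlim$ is exact on these three-term sequences; this produces the first two maps of the asserted sequence. It remains to identify the limit of the third terms. Setting $A_U=\pi_U(HU\cap [G,G]G^pU)$ and $B_U=\pi_U([H,G]H^pU)$, one has $\pi_U^{-1}(A_U)=HU\cap [G,G]G^pU$ and $\pi_U^{-1}(B_U)=[H,G]H^pU$, so, viewing everything inside $G=\varprojlim G/U$,
$$\varprojlim_U A_U=\bigcap_U(HU\cap [G,G]G^pU)=\Big(\bigcap_U HU\Big)\cap\Big(\bigcap_U [G,G]G^pU\Big)=H\cap\overline{[G,G]G^p},$$
using that $\bigcap_U SU=\overline{S}$ for any subgroup $S$ (the $U$ form a base of neighborhoods of $1$) and that intersection distributes over the two families; similarly $\varprojlim_U B_U=\overline{[H,G]H^p}$. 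Applying the exact functor $\varprojlim$ to $0\to B_U\to A_U\to A_U/B_U\to 0$ then gives $\varprojlim_U(A_U/B_U)=(H\cap\overline{[G,G]G^p})/\overline{[H,G]H^p}$, which is exactly the required third term.

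The main obstacle I expect is this last identification: one must verify the reduction $[HU,G](HU)^p\,U=[H,G]H^p\,U$ in each finite quotient and justify commuting the inverse limit past the subquotient, both of which rely on the finiteness that forces the $\varprojlim^1$ terms to vanish and on the description $\overline{S}=\bigcap_U SU$ of closures in $G$. Naturality of the resulting sequence follows from the naturality of the discrete Hopf sequences and of the inverse-limit construction.
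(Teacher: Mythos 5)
Your proposal is correct, but it takes a genuinely different route from the paper. The paper proves the lemma in one stroke from existing profinite machinery: it invokes the five-term exact sequence for continuous homology of a profinite group and its closed normal subgroup (Corollary 7.2.6 of Ribes--Zalesskii), and then identifies the low-degree terms via Pontryagin duality between continuous homology and cohomology, getting $H_1^{\sf cont}(G,\ZZ/p)=G/\overline{[G,G]G^p}$ and $H_0^{\sf cont}(G,H_1^{\sf cont}(H,\ZZ/p))=H/\overline{[H,G]H^p}$, after which exactness yields the stated cokernel. You instead deduce the profinite formula from the discrete one (Lemma \ref{lemma_hopf}) applied to the finite quotients $G/U$, and pass to the inverse limit; the key bookkeeping steps --- the reductions $[HU,G](HU)^pU=[H,G]H^pU$ and $(HU)^pU=H^pU$, the cofinality of $\{HU\}$ among open normal subgroups containing $H$, and the identification $\bigcap_U SU=\overline{S}$ --- are all verified correctly, so your limit of third terms is exactly $\bigl(H\cap\overline{[G,G]G^p}\bigr)/\overline{[H,G]H^p}$. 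Your approach is more self-contained (it needs no continuous five-term sequence and no duality, only the discrete Hopf formula and exactness of inverse limits of finite groups) and it makes transparent why closures appear in the profinite statement; the paper's approach is shorter and avoids all quotient bookkeeping by leaning on the cited machinery. One point to tighten: your appeal to the Mittag--Leffler condition and vanishing of $\varprojlim^1$ is literally justified only for countably indexed systems, whereas the lemma is stated for arbitrary profinite $G$; the correct general statement is that $\varprojlim$ is exact on inverse systems of finite (compact) groups over any directed poset (e.g., Ribes--Zalesskii, Proposition 2.2.4), and citing that instead makes your argument fully general.
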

\begin{proof}
For the sake of simplicity we set $H_*(-)=H_*^{{\sf discr}}(-,\ZZ/p)$  and $H_*^{\sf cont}(-):=H^{\sf cont}_*(-,\ZZ/p).$ Consider the five term exact sequence
 (Corollary 7.2.6 of \cite{RibesZalesskii})
$$  H^{\sf cont}_2(G) \longrightarrow H^{\sf cont}_2(G) \longrightarrow H^{\sf cont}_0(G,H^{\sf cont}_1(H)) \longrightarrow H^{\sf cont}_1(G).$$
Continuous homology and cohomology of profinite groups are Pontryagin dual to each other (Proposition 6.3.6 of \cite{RibesZalesskii}). There are isomorphisms $$H^1_{\sf cont}(G)={\it Hom}(G/\overline{[G,G]G^p},\ZZ/p)={\it Hom}(G/\overline{[G,G]G^p},\QQ/\ZZ),$$ where ${\it Hom}$ denotes the set of continuous homomorphisms (see \cite[I.2.3]{Serre}).   It follows that $H_1^{\sf cont}(G)=G/\overline{[G,G]G^p}.$ Similarly  $H_1^{\sf cont}(H)=H/\overline{[H,H]H^p}.$ Lemma 6.3.3. of \cite{RibesZalesskii} implies that  $H_0^{\sf cont}(G,M)=M/\overline{\langle m-mg\mid m\in M, g\in G  \rangle }$ for any profinite $(\ZZ/p[G])^\wedge$-module $M.$ Therefore $H^{\sf cont}_0(G,H^{\sf cont}_1(H))=H/\overline{[H,H]H^p}.$ The assertion follows.
\end{proof}

We denote by $\varphi$ the comparison map $$\varphi:H_2^{\sf disc}(G,\ZZ/p)\to H_2^{\sf cont}(G,\ZZ/p).$$

\begin{Theorem}\label{theorem_cokernels} Let $G$ be a finitely generated profinite group and $H$ a closed normal subgroup of $G$. Denote
\begin{align*} & Q^{\sf disc}:={\sf Coker}(H_2(G,\ZZ/p)\to H_2(G/H,\ZZ/p))\\ & Q^{\sf cont}:={\sf Coker}(H_2^{\sf cont}(G,\ZZ/p)\to H_2^{\sf cont}(G/H,\ZZ/p)).\end{align*}
 Then the comparison maps $\varphi$ induce an isomorphism
 $Q^{\sf disc}\cong Q^{\sf cont}:$

$$ \xymatrix{
H_2^{\sf disc}(G,\ZZ/p)\ar[r] \ar[d]^{\varphi} & H_2^{\sf disc}(G/H,\ZZ/p)\ar[r] \ar[d]^{\varphi} & Q^{\sf disc}\ar[r] \ar[d]^{\cong} & 0
\\
H_2^{\sf cont}(G,\ZZ/p)\ar[r] & H_2^{\sf cont}(G/H,\ZZ/p)\ar[r] & Q^{\sf cont}\ar[r] & 0
}$$
\end{Theorem}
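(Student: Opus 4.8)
The plan is to build the commutative diagram with exact rows by comparing the two mod-$p$ Hopf formulas (Lemma~\ref{lemma_hopf} and Lemma~\ref{lemma_profinite_hopf}), and then to show that the induced map on the right-hand terms is an isomorphism. Both rows end in a quotient group of the form "(intersection with the commutator-plus-$p$-th-powers subgroup) modulo (the subgroup $[H,G]H^p$)", the only difference being that the continuous version takes closures. So the heart of the matter is to prove that, for a finitely generated profinite $G$ and closed normal $H$, the relevant subgroups are already closed, so the closures in Lemma~\ref{lemma_profinite_hopf} are redundant and the discrete and continuous $Q$-terms literally coincide.

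\medskip

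First I would identify the cokernel terms explicitly. By Lemma~\ref{lemma_hopf}, applied to $G$ viewed as a discrete group,
$$Q^{\sf disc}\cong \frac{H\cap ([G,G]G^p)}{[H,G]H^p},$$
and by Lemma~\ref{lemma_profinite_hopf},
$$Q^{\sf cont}\cong \frac{H\cap \overline{[G,G]G^p}}{\overline{[H,G]H^p}}.$$
Next I would invoke the finiteness hypothesis. Since $G$ is finitely generated profinite, Corollary~\ref{corollary_[H,G]H^p} applied to $H=G$ shows $[G,G]G^p$ is closed, so $\overline{[G,G]G^p}=[G,G]G^p$; applied to the given $H$ it shows $[H,G]H^p$ is closed, so $\overline{[H,G]H^p}=[H,G]H^p$. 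Therefore the two quotients have identical numerators and identical denominators as subgroups of $G$, giving a canonical isomorphism $Q^{\sf disc}\cong Q^{\sf cont}$.

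\medskip

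Finally I would check that this abstract isomorphism is compatible with the comparison maps $\varphi$, so that the whole diagram commutes. For this I would verify that the identifications of $Q^{\sf disc}$ and $Q^{\sf cont}$ with the Hopf quotients are natural in the pair $(G,H)$: both arise from the five-term exact sequences (ordinary and continuous) together with the standard descriptions of $H_1$ and of coinvariants, and the comparison map $\varphi$ is induced by the identity on $G$ regarded as a map from the discrete to the profinite world. Chasing the square built from the two five-term sequences then shows the rightmost vertical map is exactly the isomorphism identified above, completing the diagram.

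\medskip

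The main obstacle I anticipate is the closedness input: without the Nikolov--Segal theorem (Theorem~\ref{NikolovSegal}) one cannot drop the closures, and the two cokernels could genuinely differ. Everything else is a naturality bookkeeping exercise on five-term sequences; the essential content is precisely that, for finitely generated profinite groups, $[H,G]H^p$ and $[G,G]G^p$ are automatically closed, which is what forces $Q^{\sf disc}$ and $Q^{\sf cont}$ to agree.
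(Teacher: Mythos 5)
Your proposal is correct and follows essentially the same route as the paper: the paper's proof is exactly the combination of Lemma~\ref{lemma_hopf}, Lemma~\ref{lemma_profinite_hopf}, and Corollary~\ref{corollary_[H,G]H^p} (applied both to $H$ and to $H=G$) to identify the two cokernels with literally the same subquotient of $G$, the Nikolov--Segal theorem being what makes the closures redundant. Your additional naturality check for compatibility with $\varphi$ is the same bookkeeping the paper leaves implicit.
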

\begin{proof} This follows from Lemma \ref{lemma_hopf}, Lemma \ref{lemma_profinite_hopf} and Corollary \ref{corollary_[H,G]H^p}.
\end{proof}

\begin{Corollary}\label{corollary_main} Let $G$ be a finitely generated pro-$p$-group and $\pi:\hat F_p\epi G$ be a continuous epimorphism from the pro-$p$-completion of a finitely generated free group $F.$ Then the sequence
$$H_2^{\sf disc}(\hat F_p,\ZZ/p)\overset{\pi_*}\longrightarrow H_2^{\sf disc}(G,\ZZ/p)\overset{\varphi}\longrightarrow H^{\sf cont}_2(G,\ZZ/p)\longrightarrow 0$$
is exact.
\end{Corollary}
\begin{proof}
This follows from Theorem \ref{theorem_cokernels} and the fact that $H_2^{\sf cont}(\hat F_p,\ZZ/p)=0.$
\end{proof}

\section{Technical results about the ring of power series $\ZZ/p[\![x]\!] $}
In this section we follow to ideas of Bousfield written in Lemmas 10.6,  10.7 of \cite{Bousfield92}.
The goal of this section is to prove Proposition \ref{proposition_kernel_uncountable}

We use the following notation: $C=\langle t \rangle$ is the infinite cyclic group; $C\otimes \ZZ_p$ is the group of  $p$-adic integers written multiplicatively as powers of the generator $C\otimes \ZZ_p=\{ t^\alpha\mid \alpha\in \ZZ_p \};$ $\ZZ/p[\![x]\!]$ is the ring of power series;  $\ZZ/p(\!(x)\!)$ is the field of formal Laurent series.

\begin{Lemma}\label{lemma_dense} Let ${\sf A}$ be a subset of $\ZZ/p[\![x]\!].$ Denote by ${\sf A}^i$ the image of ${\sf A}$ in $\ZZ/p[x]/(x^{p^i}).$ Assume that
$$ \lim_{i\to \infty} |{\sf A}^i|/p^{p^{i}}=0. $$ Then the interior of $\ZZ/p[\![x]\!]\setminus {\sf A}$ is dense in $\ZZ/p[\![x]\!].$
\end{Lemma}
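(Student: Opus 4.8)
The plan is to exploit the profinite topology on $\ZZ/p[\![x]\!]=\ilimit \ZZ/p[x]/(x^n)$, for which a basis of clopen sets is given by the cosets $f+(x^n)$ with $f\in \ZZ/p[\![x]\!]$ and $n\geq 0$. Under the identification $\ZZ/p[\![x]\!]/(x^{p^i})=\ZZ/p[x]/(x^{p^i})$, the set ${\sf A}^i$ is precisely the set of those cosets of $(x^{p^i})$ that meet ${\sf A}$. Since $\ZZ/p[x]/(x^{p^i})$ is a $\ZZ/p$-vector space of dimension $p^i$, there are $p^{p^i}$ cosets of $(x^{p^i})$ in total, of which $|{\sf A}^i|$ meet ${\sf A}$ and $p^{p^i}-|{\sf A}^i|$ avoid it. The whole point of the hypothesis is that the fraction of cosets avoiding ${\sf A}$ tends to $1$.

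To prove that the interior of $\ZZ/p[\![x]\!]\setminus {\sf A}$ is dense, it suffices to show that every nonempty basic open set $V=g+(x^n)$ contains a nonempty open set disjoint from ${\sf A}$, and I would produce such a set as a single coset of $(x^{p^i})$ for a suitable $i$. First I would fix $i$ large enough that $p^i\geq n$, so that $(x^{p^i})\subseteq (x^n)$ and $V$ decomposes as a disjoint union of cosets of $(x^{p^i})$. A dimension count in $(x^n)/(x^{p^i})$, which is a $\ZZ/p$-vector space of dimension $p^i-n$, shows that this union consists of exactly $p^{p^i-n}$ cosets.

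Next I would bound the number of these cosets that can meet ${\sf A}$: it is at most $|{\sf A}^i|$, since $|{\sf A}^i|$ already counts all cosets of $(x^{p^i})$ meeting ${\sf A}$ in the entire ring. Hence, as soon as $|{\sf A}^i|<p^{p^i-n}$, equivalently $|{\sf A}^i|/p^{p^i}<p^{-n}$, at least one of the $p^{p^i-n}$ cosets contained in $V$ is disjoint from ${\sf A}$. By the assumption $\lim_{i\to\infty}|{\sf A}^i|/p^{p^i}=0$, the inequality $|{\sf A}^i|/p^{p^i}<p^{-n}$ holds for all sufficiently large $i$; choosing such an $i$ yields a coset $h+(x^{p^i})\subseteq V$ disjoint from ${\sf A}$. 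This coset is open and nonempty, so it lies in the interior of $\ZZ/p[\![x]\!]\setminus {\sf A}$. Therefore this interior meets every basic open set and is dense.

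The argument is purely a counting argument, so I do not anticipate a genuine obstacle. The only step requiring care is the bookkeeping of dimensions: refining the partition from level $(x^n)$ to level $(x^{p^i})$ multiplies the number of cosets inside $V$ by $p^{p^i-n}$, while the number of cosets meeting ${\sf A}$ stays bounded by $|{\sf A}^i|$, and it is the competition between these two quantities—resolved in favour of the good cosets once $i$ is large relative to $n$—that the hypothesis is tailored to control.
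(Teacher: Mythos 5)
Your proof is correct and follows essentially the same route as the paper's: decompose a basic open set into cosets of $(x^{p^i})$, note that at most $|{\sf A}^i|$ of them can meet ${\sf A}$, and use the hypothesis to make the number of cosets exceed this bound, so that pigeonhole yields a coset disjoint from ${\sf A}$. Your bookkeeping (general level $n$, threshold $p^{-n}$) is in fact slightly cleaner than the paper's (which works with levels $p^s$ and threshold $p^{-p^s}$), but the argument is the same.
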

\begin{proof}
Take any power series $f$ and any its neighbourhood of the form $f+(x^{p^s}).$ Then for any $i$ the open set $f+(x^{p^s})$ is the disjoint union of smaller open sets $\bigcup_{t=1}^{p^{p^i}} f+f_t+(x^{p^{s+i}}),$ where $f_t $ runs over representatives of $(x^{p^s})/(x^{p^{s+i}}).$ Chose $i$ so that $|{\sf A}^{s+i}|/ p^{p^{i+s}} \leq p^{-p^s}.$ Then $|{\sf A}^{s+i}| \leq p^{p^i}.$ Hence the number of elements in ${\sf A}^{i+s}$ is lesser the number of open sets $f+f_t+(x^{p^{s+i}})$. It follows that there exists $t$ such that ${\sf A}\cap (f+f_t+(x^{p^{s+i}})) =\emptyset.$ The assertion follows.
\end{proof}

Denote by  $$\tau: C\otimes \ZZ_p\to \ZZ/p[\![x]\!]$$  the continuous multiplicative homomorphism sending $t$ to $1-x.$ It is well defined because $(1-x)^{p^i}=1-x^{p^i}.$

\begin{Lemma}\label{lemma_Laurent} Let   $K$ be the subfield of $\ZZ/p(\!(x)\!)$ generated by the image of $\tau$. Then the degree of the extension $[\ZZ/p(\!(x)\!):K]$ is uncountable.
\end{Lemma}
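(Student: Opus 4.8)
The plan is to show that if $[\ZZ/p(\!(x)\!):K]$ were countable, then $R:=\ZZ/p[\![x]\!]$ would be meager in itself, which is impossible: with the $x$-adic topology $R$ is homeomorphic to the Cantor set $\prod_{n\ge 0}\ZZ/p$, a nonempty compact metric space without isolated points, hence a Baire space. Set $\hat K:=\ZZ/p(\!(x)\!)$ and let $S\subseteq R$ be the $\ZZ/p$-span of the image $\{(1-x)^\alpha:\alpha\in\ZZ_p\}$ of $\tau$; since $(1-x)^\alpha(1-x)^\beta=(1-x)^{\alpha+\beta}$, the set $S$ is a subring with $K=\mathrm{Frac}(S)$. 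The single computation driving everything is that, modulo $x^{p^i}$, one has $(1-x)^\alpha\equiv (1-x)^{\alpha\bmod p^i}$, so the generators have only the $p^i$ reductions $(1-x)^0,\dots,(1-x)^{p^i-1}$ in $\ZZ/p[x]/(x^{p^i})$. I emphasize at the outset that these $p^i$ elements already form a $\ZZ/p$-basis of $\ZZ/p[x]/(x^{p^i})$, so $S$ is \emph{dense} in $R$ and $K\cap R$ is dense; thus density alone yields nothing, and the whole point will be to stratify $K$ by the \emph{complexity} of its elements.

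Assume $[\hat K:K]$ is countable. Then $\hat K=\bigcup_n V_n$ is an increasing union of finite-dimensional $K$-subspaces, and after scaling the basis vectors by suitable powers of $x\in K$ I may take $V_n=Ke_1+\dots+Ke_{d_n}$ with all $e_j\in R$. Since $R\subseteq\hat K$ this gives $R=\bigcup_n (V_n\cap R)$. Now I stratify: writing $\xi\in V_n\cap R$ as $\xi=\sum_j k_je_j$ with $k_j\in K$ and clearing to a common denominator $d\in S\setminus\{0\}$ produces a relation $d\xi=s_1e_1+\dots+s_{d_n}e_{d_n}$ with $d,s_j\in S$. Letting $S_{\le N}$ denote the set of $\ZZ/p$-combinations of at most $N$ of the elements $(1-x)^\alpha$, and setting
$$A_{n,N}:=\{\xi\in R:\ d\xi=s_1e_1+\dots+s_{d_n}e_{d_n}\ \text{for some}\ d,s_1,\dots,s_{d_n}\in S_{\le N},\ d\neq0\},$$
we get $V_n\cap R=\bigcup_N A_{n,N}$, hence $R=\bigcup_{n,N}A_{n,N}$ is a countable union. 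It now suffices to show each $A_{n,N}$ is nowhere dense, and for this I will apply Lemma \ref{lemma_dense}, i.e. verify $|A_{n,N}^i|/p^{p^i}\to 0$, where $A_{n,N}^i$ is the image in $\ZZ/p[x]/(x^{p^i})$.

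The remaining content is the counting estimate. By the finite-level computation above, an element of $S_{\le N}$ has at most polynomially many (in $m$, of degree $N$) reductions modulo $x^{m}$, since it is determined by a choice of at most $N$ generator-reductions together with $N$ scalars. Moreover a solution $\xi$ of $d\xi=\sum_j s_je_j$ has its residue $\xi\bmod x^{p^i}$ determined by the reductions of $d$ and the $s_j$ modulo $x^{p^i+v(d)}$ (factor $d=x^{v(d)}d_0$ with $d_0$ a unit and invert $d_0$). Feeding the polynomial count into this shows, for each fixed denominator-valuation, that only polynomially-in-$p^i$ many residues arise, which is $o(p^{p^i})$, so Lemma \ref{lemma_dense} makes $A_{n,N}$ nowhere dense and the Baire contradiction closes the argument. \textbf{The main obstacle} is exactly the control of $v(d)$: bounded-complexity elements can have \emph{unbounded} valuation — already the two-term element $(1-x)^0-(1-x)^{p^j}=x^{p^j}$ has valuation $p^j$ — so summing the per-valuation counts over all $v(d)$ is delicate, and one must show (this is where the estimates behind Lemmas 10.6--10.7 of \cite{Bousfield92} enter) that denominators of large valuation still contribute only polynomially many residues modulo $x^{p^i}$. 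Once this uniform polynomial bound is in hand, $R=\bigcup_{n,N}A_{n,N}$ exhibits $R$ as meager in itself, the desired contradiction, and $[\ZZ/p(\!(x)\!):K]$ is uncountable.
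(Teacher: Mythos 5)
Your overall strategy --- Baire category in $\ZZ/p[\![x]\!]$ combined with counting residues modulo $x^{p^i}$ via Lemma \ref{lemma_dense} --- is the same strategy as the paper's proof, and your reduction steps are sound: assuming countable degree, writing $\ZZ/p(\!(x)\!)$ as a countable union of finite-dimensional $K$-subspaces with generators in $R=\ZZ/p[\![x]\!]$, and clearing denominators inside the span $S$ of the image of $\tau$. But there is a genuine gap at precisely the point you yourself flag as ``the main obstacle'': you never prove the counting hypothesis $|A_{n,N}^i|/p^{p^i}\to 0$ for your strata $A_{n,N}$. Your per-valuation count is correct (for a \emph{fixed} denominator valuation $v$, only polynomially many residues arise), but $A_{n,N}$ is the union over all $v\ge 0$ of these sets; the per-valuation bounds grow with $v$ and do not sum to $o(p^{p^i})$. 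Deferring the needed uniform-in-$v$ bound to ``estimates behind Lemmas 10.6--10.7 of \cite{Bousfield92}'' is not a proof: that bound is the entire analytic content of the lemma, and it is exactly what is missing from your write-up.

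The paper sidesteps this issue with a simple but crucial device that your stratification omits: it makes the valuation bound itself one of the (countably many) stratification parameters. In the paper's notation, ${\sf A}_{\alpha,\beta,k}$ consists of ratios whose denominator lies outside $(x^{p^k})$; since every nonzero denominator has finite valuation, the union over all $k$ still exhausts $K\cap \ZZ/p[\![x]\!]$, and the decomposition remains countable. Within one such stratum the count is easy and uniform in $i$: in $\ZZ/p[x]/(x^{p^i})$ the reduction $\bar d$ of the denominator has annihilator of size at most $p^{p^k}$, so for each of the at most $p^{2in}$ choices of reduced numerator and denominator data, the equation $\bar r\,\bar d=\bar s$ has at most $p^{p^k}$ solutions $\bar r$, giving $|{\sf A}^i_{\alpha,\beta,k}|\le p^{2in+p^k}=o(p^{p^i})$. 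You could repair your argument the same way: replace $A_{n,N}$ by $A_{n,N,v}:=\{\xi\in R:\ d\xi=\sum_j s_je_j\ \text{for some}\ d,s_j\in S_{\le N},\ d\neq 0,\ v(d)\le v\}$. Then $R=\bigcup_{n,N,v}A_{n,N,v}$ is still a countable union, your own per-valuation estimate shows each $A_{n,N,v}$ satisfies the hypothesis of Lemma \ref{lemma_dense}, and the Baire argument closes as you intended. No bound uniform over all valuations is needed, and seeking one is the wrong move.
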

\begin{proof} Denote the image of the map $\tau:C\otimes \ZZ_p\to \ZZ/p[\![x]\!]$ by ${\sf A}.$ Let $\alpha=(\alpha_1,\dots,\alpha_n)$ and $\beta=(\beta_1,\dots,\beta_n)$, where $\alpha_1,\dots,\alpha_n,\beta_1,\dots,\beta_n\in \ZZ/p,$ and $k\geq 1.$ Denote by ${\sf A}_{\alpha,\beta,k}$ the subset of $\ZZ/p[\![x]\!]$ consisting of elements that can be written in the form
\begin{equation}\label{eq_ratio}
\frac{\alpha_1a_1+\dots +\alpha_na_n}{\beta_1b_1+\dots+\beta_nb_n},
\end{equation}
where $a_1,\dots,a_n,b_1,\dots,b_n\in {\sf A}$ and $\beta_1b_1+\dots+\beta_nb_n \notin (x^{p^k}).$
Then $K\cap \ZZ/p[\![x]\!]= \bigcup_{\alpha,\beta,k}{\sf A}_{\alpha,\beta,k}.$

Fix some $\alpha,\beta,k.$ Take $i\geq k$ and consider the images of ${\sf A}$ and  ${\sf A}_{\alpha,\beta,k}$ in $\ZZ/p[x]/(x^{p^i}).$ Denote them by $ {\sf A}^i $ and $ {\sf A}_{\alpha,\beta,k}^i.$ Obviously ${\sf A}^i $ is the image of the map $C/C^{p^i} \to \ZZ/p[x]/(x^{p^i})$ that sends $t$ to $1-x.$ Then ${\sf A}^i$  consists of $p^i$ elements. Fix some elements $\bar a_1,\dots,\bar a_n,\bar b_1,\dots,\bar b_n\in {\sf A}^i$  that have  preimages $a_1,\dots,a_n,b_1,\dots,b_n \in {\sf A}$ such that the ratio  \eqref{eq_ratio} is in ${\sf A}_{\alpha,\beta,k}.$ For any such preimages $a_1,\dots,a_n,b_1,\dots,b_n \in {\sf A}$ the image $\bar r$ of the ratio \eqref{eq_ratio} satisfies the equation
$$\bar r\cdot (\beta_1\bar b_1+\dots+\beta_n\bar b_n)=\alpha_1\bar a_1+\dots \alpha_n\bar a_n.$$ Since $\beta_1\bar b_1+\dots+\beta_n\bar b_n \notin (x^{p^k}),$ the annihilator of $\beta_1\bar b_1+\dots+\beta_n\bar b_n$ consist of no more than $p^{p^k}$ elements  and the equation has no more than $p^{p^{k}}$ solutions.  Then we have no more than $p^{2in}$ variants of collections $\bar a_1,\dots,\bar a_n,\bar b_1,\dots,\bar b_n\in {\sf A}^i$ and for any such variant there are no more than $p^{p^k}$ variants for the image of the ratio.  Therefore
\begin{equation}\label{eq_ineq1}
|{\sf A}^i_{\alpha,\beta,k}|\leq p^{2in+p^k}.
\end{equation}

Take any sequence of elements $v_1, v_2,\dots\in \ZZ/p(\!(x)\!)$ and prove that  $\sum_{m=1}^\infty Kv_m \ne \ZZ/p(\!(x)\!).$ Note that $x\in K$ because $t\mapsto 1-x.$ Multiplying the elements $v_1,v_2,v_3,\dots $ by powers of $x,$ we can assume that $v_1, v_2,\dots\in \ZZ/p[\![x]\!].$ Fix some $\alpha,\beta,k$ as above. Set $${\sf A}_{\alpha,\beta,k,l}={\sf A}_{\alpha,\beta,k}\cdot v_1+\dots +{\sf A}_{\alpha,\beta,k}\cdot v_l.$$ Then  $\sum_{m=1}^\infty Kv_m = \bigcup_{\alpha,\beta,k,l,j} {\sf A}_{\alpha,\beta,k,l}\cdot x^{-j}.$ Denote by ${\sf A}_{\alpha,\beta,k,l}^i$ the image of ${\sf A}_{\alpha,\beta,k,l}$ in $\ZZ/p[x]/(x^i).$ Then \eqref{eq_ineq1} implies
$ |{\sf A}_{\alpha,\beta,k,l}^i| \leq p^{(2in+p^k)l}.$ Therefore $$\lim_{i\to \infty} |{\sf A}_{\alpha,\beta,k,l}^i|/p^{p^{i}} =0.$$
By Lemma \ref{lemma_dense} the interior  of the complement of ${\sf A}_{\alpha,\beta,k,l}$ is dense in $\ZZ/p[\![x]\!]$.  By the Baire theorem $\sum_{m=1}^\infty Kv_m  = \bigcup_{\alpha,\beta,k,l,j} {\sf A}_{\alpha,\beta,k,l}\cdot x^{-j}$ has empty interior. In particular $\sum_{m=1}^\infty Kv_m \ne \ZZ/p(\!(x)\!).$
\end{proof}

\begin{Proposition}\label{proposition_kernel_uncountable} Consider the ring homomorphism $\ZZ/p[C\otimes \ZZ_p]\to \ZZ/p[\![x]\!]$ induced by $\tau$. Then the kernel of the multiplication map
\begin{equation}\label{eq_multiplication_map}
\ZZ/p[\![x]\!]\otimes_{\ZZ/p[C\otimes \ZZ_p]} \ZZ/p[\![x]\!] \longrightarrow \ZZ/p[\![x]\!]
\end{equation}
is uncountable.
\end{Proposition}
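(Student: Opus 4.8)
The plan is to reduce the statement to the field-theoretic estimate of Lemma \ref{lemma_Laurent} by localizing the power series ring at $x$ and passing to the fraction field $K$. Throughout, write $R=\ZZ/p[\![x]\!]$, $S=\ZZ/p[C\otimes\ZZ_p]$, and let $A\subseteq R$ be the image of the ring homomorphism $S\to R$ induced by $\tau$; thus $A$ is the $\ZZ/p$-subalgebra generated by the elements $(1-x)^\alpha$, $\alpha\in\ZZ_p$, and the field $K$ of Lemma \ref{lemma_Laurent} is exactly the fraction field of $A$ inside $L:=\ZZ/p(\!(x)\!)$. Since $S\to A$ is surjective the tensor product is unchanged by working over $A$, so $R\otimes_S R=R\otimes_A R$, and I denote the multiplication map by $\mu\colon R\otimes_A R\to R$.

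First I would observe that $A$ contains $1$ and $1-x=\tau(t)$, hence contains $x=1-(1-x)$; thus $x$ becomes a unit after localization and $R[x^{-1}]=\ZZ/p(\!(x)\!)=L$. Localizing the exact sequence $0\to\ker\mu\to R\otimes_A R\xrightarrow{\mu} R$ at the multiplicative set $\{x^n\}\subseteq A$, and using that localization is exact and commutes with tensor products, I obtain $\ker(\mu_x)=(\ker\mu)_x$, where $\mu_x\colon L\otimes_{A_x}L\to L$ is the induced multiplication map and $A_x=A[x^{-1}]$. Since every element of $(\ker\mu)_x$ has the form $m/x^n$ with $m\in\ker\mu$ and $n\geq 0$, a countable $\ker\mu$ would force $(\ker\mu)_x$ to be countable; hence it suffices to prove that $\ker(\mu_x)$ is uncountable.

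Next I would compare $A_x$ with $K=\mathrm{Frac}(A)=\mathrm{Frac}(A_x)$. The inclusions $A_x\subseteq K\subseteq L$ give a canonical surjection of $\ZZ/p$-vector spaces $L\otimes_{A_x}L\twoheadrightarrow L\otimes_K L$ commuting with the two multiplication maps to $L$; a short diagram chase then shows that this surjection carries $\ker(\mu_x)$ onto $\ker(\mu_K)$, where $\mu_K\colon L\otimes_K L\to L$. Therefore it is enough to see that $\ker(\mu_K)$ is uncountable. Viewing $L\otimes_K L$ as a left $L$-vector space, a $K$-basis of $L$ yields an $L$-basis, so $\dim_L(L\otimes_K L)=[L:K]$, while $\mu_K$ is a surjective $L$-linear map onto the one-dimensional space $L$; thus $\dim_L\ker(\mu_K)=[L:K]-1$. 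By Lemma \ref{lemma_Laurent} the degree $[L:K]$ is uncountable, so $\ker(\mu_K)$ is a nonzero $L$-vector space, hence uncountable as a set because $L$ itself is uncountable. Tracing the reductions backwards gives the uncountability of $\ker\mu$.

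The only genuinely hard input is Lemma \ref{lemma_Laurent}, which is already established; the remaining work is the bookkeeping above. The step requiring the most care is the passage between the three rings $A\subseteq A_x\subseteq K$: one must check that inverting $x$ does not enlarge the kernel uncontrollably (this is the ``$m/x^n$'' counting argument, giving the implication from $(\ker\mu)_x$ back to $\ker\mu$), and that replacing the non-field $A_x$ by its fraction field $K$ only collapses the tensor product, so that the dimension count is available over the field $K$ and the resulting uncountable kernel lifts back along the surjection to an uncountable $\ker(\mu_x)$, and hence to an uncountable $\ker\mu$.
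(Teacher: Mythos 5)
Your proof is correct, and while it funnels through the same bottleneck as the paper---the multiplication map $\ZZ/p(\!(x)\!)\otimes_K\ZZ/p(\!(x)\!)\to\ZZ/p(\!(x)\!)$ over the field $K$, with Lemma~\ref{lemma_Laurent} as the essential input---both the reduction to that map and the final uncountability argument are carried out differently. The paper does not localize: it works with the intersection ring $R_0:=K\cap\ZZ/p[\![x]\!]$ (which contains your $A$; the intersection is needed so that inverses of power series in $K$ with nonzero constant term stay in the ring), proves by an explicit two-sided inverse $\kappa$ that $\ZZ/p[\![x]\!]\otimes_{R_0}K\cong\ZZ/p(\!(x)\!)$ (this is \eqref{eq_K_iso}), base-changes the multiplication map along $-\otimes_{R_0}K$, and then argues by contradiction with a dimension count: a countable kernel upstairs would give a kernel of countable $K$-dimension downstairs, whereas $\Lambda^2_K\,\ZZ/p(\!(x)\!)$ embeds into that kernel and has uncountable $K$-dimension. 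Your two substitutes are each simpler: inverting $x$ (exactness of localization, compatibility with tensor products, and the $m/x^n$ cardinality count) replaces the isomorphism \eqref{eq_K_iso} and its $\kappa$-construction, and the remark that $\ker(\mu_K)$ is a proper $L$-subspace cut out by one $L$-linear equation in a free left $L$-module of rank $[L:K]$---hence nonzero as soon as $[L:K]\geq 2$, hence of cardinality at least $|L|$---replaces the exterior-square embedding. A noteworthy by-product of your route is that it only uses $K\neq L$, not the full uncountability of $[L:K]$, whereas the paper's counting contradiction genuinely needs the uncountable degree; since Lemma~\ref{lemma_Laurent} supplies the stronger statement anyway, this costs nothing here, but it shows your argument is the more economical one. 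Two cosmetic remarks: the expression $\dim_L\ker(\mu_K)=[L:K]-1$ should be read, for infinite cardinals, simply as the assertion that the kernel of a surjection onto a one-dimensional space from a space of rank $\geq 2$ is nonzero; and your $R=\ZZ/p[\![x]\!]$ collides with the paper's $R=K\cap\ZZ/p[\![x]\!]$, which is harmless but worth flagging when reading the two proofs side by side.
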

\begin{proof}
As in Lemma \ref{lemma_Laurent} we denote by $K$ the subfield of $\ZZ/p(\!(x)\!) $ generated by the image of $C\otimes \ZZ_p.$ Since $t\mapsto 1-x,$ we have $x,x^{-1}\in K.$ Set $R:=K\cap \ZZ/p[\![x]\!].$ Note that the image of $\ZZ/p[C\otimes \ZZ_p]$ lies in $R.$ Consider the multiplication map $$\mu:\ZZ/p[\![x]\!]\otimes_R K \to \ZZ/p(\!(x)\!).$$ We claim that this  is an isomorphism. Construct the map in the inverse direction $$\kappa: \ZZ/p(\!(x)\!) \to \ZZ/p[\![x]\!]\otimes_R K$$ given by $$ \kappa(\sum_{i=-n}^\infty \alpha_ix^i)=\sum_{i=0}^\infty \alpha_{i+n} x^i \otimes x^{-n}.$$ Since we have $ax\otimes b=a\otimes xb,$  $\kappa$ does not depend on the choice of $n$, we just have to chose it big enough. Using this we get that $\kappa$ is well defined. Obviously $\mu\kappa={\sf id}. $ Chose $a\otimes b\in \ZZ/p[\![x]\!]\otimes_R K.$ Then $b=b_1b_2^{-1},$ where $b_1,b_2\in R.$ Since $b_2 $ is a power series, we can chose $n$ such that $b_2=x^nb_3,$ where $b_3$ is a power series with nontrivial constant term. Then $b_3$ is invertible in the ring of power series and $b_3,b_3^{-1}\in R$ because $x\in K.$  Hence $a\otimes b=ab_3^{-1}b_3 \otimes b=ab_3^{-1}\otimes x^{-n}b_1=ab_1b_3^{-1}\otimes x^{-n}.$ Using this presentation, we see that $\kappa\mu={\sf id}.$ Therefore
\begin{equation}\label{eq_K_iso}
\ZZ/p[\![x]\!]\otimes_R K \cong \ZZ/p(\!(x)\!).
\end{equation}

Since the image of $\ZZ/p[C\otimes \ZZ_p]$ lies in $R,$ the tensor product $\ZZ/p[\![x]\!]\otimes_R \ZZ/p[\![x]\!]$ is a quotient of the tensor product $\ZZ/p[\![x]\!]\otimes_{\ZZ/p[C\otimes \ZZ_p]} \ZZ/p[\![x]\!]$ and it is enough to prove that the kernel of
\begin{equation}\label{eq_multiplication_map2}
\ZZ/p[\![x]\!]\otimes_R \ZZ/p[\![x]\!] \longrightarrow \ZZ/p[\![x]\!]
\end{equation}
is uncountable.

 For any ring homomorphism $R\to S$ and any $R$-modules $M,N$ there is an isomorphism $(M\otimes_R N)\otimes_R S=(M\otimes_R S)\otimes_S (N\otimes_R S).$ Using this and the isomorphism \eqref{eq_K_iso}, we obtain that after application of $-\otimes_R K$ to  \eqref{eq_multiplication_map2}  we have
\begin{equation}\label{eq_multiplication_map3}
 \ZZ/p(\!(x)\!)\otimes_K \ZZ/p(\!(x)\!) \longrightarrow \ZZ/p(\!(x)\!).
\end{equation}
 Assume the contrary that the kernel of the map \eqref{eq_multiplication_map2} is countable (countable = countable or finite). It follows that  the linear map \eqref{eq_multiplication_map3} has countably dimensional kernel.
 Finally, note that the homomorphism $$\Lambda^2_K \ZZ/p(\!(x)\!) \to \ZZ/p(\!(x)\!)\otimes_K \ZZ/p(\!(x)\!)$$ given by  $a\wedge b\mapsto a\otimes b- b\otimes a$ is a monomorphism, its image lies in the kernel and the dimension of  $\Lambda^2_K \ZZ/p(\!(x)\!)$ over $K$ is uncountable because $[\ZZ/p(\!(x)\!):K]$ is uncountable (Lemma \ref{lemma_Laurent}). A contradiction follows.
\end{proof}

\section{Double lamplighter pro-$p$-group}

 Let $A$ be a finitely generated free abelian group written multiplicatively; $\ZZ/p[A]$ be its group algebra; $I$ be its augmentation ideal and $M$ be a $\ZZ/p[A]$-module. Then we denote by $\hat M=\varprojlim M/MI^i$ its $I$-adic completion. We embed $A$ into the pro-$p$-group $A\otimes \ZZ_p.$ We use the following 'multiplicative' notation $a^\alpha:=a\otimes \alpha$ for $a\in A$ and $\alpha\in \ZZ_p.$ Note that for any $a\in A $ the power $a^{p^i}$ acts trivially on $ M/MI^{p^i}$ because $1-a^{p^i}=(1-a)^{p^i}\in I^{p^i}.$
Then we can extend the action of $A$ on $\hat M$ to the action of $A\otimes \ZZ_p$ on $\hat M$ in a continuous way.

The proof of the following lemma can be found in \cite{IvanovMikhailov} but we add it here for completeness.

\begin{Lemma}\label{lemma_homology_of_Z_p} Let $A$ be a finitely generated free abelian group and $M$ be a finitely generated $\ZZ/p[A]$-module. Then
$$H_*(A,M)\cong H_*(A,\hat M) \cong H_*(A\otimes \ZZ_p,\hat M).$$
\end{Lemma}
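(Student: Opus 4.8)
The plan is to treat the two isomorphisms separately, reducing everything to the Koszul complex of $A\cong\ZZ^n$ and, for the second isomorphism, to a Lyndon--Hochschild--Serre spectral sequence. Throughout I write $R=\ZZ/p[A]$, fix an isomorphism $A\cong\ZZ^n$ with generators $t_1,\dots,t_n$, and put $x_i=t_i-1$, so that $I=(x_1,\dots,x_n)$ and, since the $t_i$ are units, $\hat R=\varprojlim R/I^i=\ZZ/p[\![x_1,\dots,x_n]\!]$. Because $x_1,\dots,x_n$ is a regular sequence in $R$ generating $I$ with $R/I=\ZZ/p$, the Koszul complex $\Lambda^\bullet(\ZZ/p^n)\otimes_{\ZZ/p}M$ with differential induced by the $x_i$ computes $H_*(A,M)=\mathrm{Tor}^R_*(\ZZ/p,M)$ (the standard change of rings identifies group homology with this $\mathrm{Tor}$), and likewise for $\hat M$.

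For the first isomorphism I would invoke two standard facts. First, each $x_j$ acts as zero on Koszul homology, so $H_*(A,M)$ is a module over $R/I=\ZZ/p$; being finitely generated over the Noetherian ring $R$, it is a finite-dimensional $\ZZ/p$-vector space, hence finite. Second, $R$ being Noetherian, the $I$-adic completion $\hat R$ is flat over $R$ and $\hat M\cong M\otimes_R\hat R$ for finitely generated $M$. Flat base change then gives
$$H_*(A,\hat M)=\mathrm{Tor}^R_*(\ZZ/p,M\otimes_R\hat R)\cong \mathrm{Tor}^R_*(\ZZ/p,M)\otimes_R\hat R=H_*(A,M)\otimes_R\hat R,$$
and since $H_*(A,M)$ is killed by $I$ while $\hat R/I\hat R=\ZZ/p$, the right-hand side is just $H_*(A,M)$. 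Tracking the maps shows this isomorphism is the one induced by $M\to\hat M$; in particular $H_q(A,\hat M)$ is finite.

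For the second isomorphism I would feed the short exact sequence $1\to A\to A\otimes\ZZ_p\to D\to 1$, with $D=(A\otimes\ZZ_p)/A\cong(\ZZ_p/\ZZ)^n$, into the Lyndon--Hochschild--Serre spectral sequence of discrete group homology
$$E^2_{s,q}=H_s\bigl(D,\,H_q(A,\hat M)\bigr)\ \Longrightarrow\ H_{s+q}(A\otimes\ZZ_p,\hat M).$$
The arithmetic of $\ZZ_p/\ZZ$ is what makes this collapse. The group $\ZZ_p/\ZZ$ is divisible, and since every prime $\ell\ne p$ is invertible in $\ZZ_p$ while $p$ is not, its torsion subgroup $\ZZ_{(p)}/\ZZ$ is prime to $p$; hence $D$ splits as $V\oplus T$ with $V$ a $\QQ$-vector space and $T$ uniquely $p$-divisible. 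Both $V$ and $T$ have vanishing mod-$p$ homology in positive degrees, so by the Künneth formula over the field $\ZZ/p$ one gets $H_s(D,N)=0$ for all $s>0$ and every finite $\ZZ/p$-module $N$ with trivial action. Moreover $H_q(A,\hat M)\cong H_q(A,M)$ is finite, and $D$, being divisible, has no nontrivial finite quotient, so $D$ acts trivially on it. Therefore $E^2_{s,q}=0$ for $s>0$ and $E^2_{0,q}=H_q(A,\hat M)$, the spectral sequence degenerates, and the edge homomorphism---induced by the inclusion $A\hookrightarrow A\otimes\ZZ_p$---yields $H_q(A,\hat M)\cong H_q(A\otimes\ZZ_p,\hat M)$.

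I expect the second isomorphism to be the main obstacle: the first is essentially commutative algebra (flatness of completion, together with the fact that Koszul homology is annihilated by $I$ and is therefore finite and unchanged under completion), whereas the second rests on the nonformal input that the quotient $D=(\ZZ_p/\ZZ)^n$ is divisible with prime-to-$p$ torsion, which is exactly what kills its higher mod-$p$ homology and forces its action on the finite groups $H_q(A,\hat M)$ to be trivial. The care needed is in verifying the vanishing $H_{>0}(D,\ZZ/p)=0$ and the triviality of the coefficient action, after which the spectral sequence does the rest.
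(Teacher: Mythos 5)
Your proof is correct, and its skeleton for the second isomorphism --- the Lyndon--Hochschild--Serre spectral sequence of $A\mono A\otimes\ZZ_p\epi D$ with $D=(A\otimes\ZZ_p)/A$, triviality of the action on the finite groups $H_q(A,\hat M)$, and vanishing of the positive-degree mod-$p$ homology of $D$ --- is the same as the paper's. The details, however, differ in three genuine ways. First, for $H_*(A,M)\cong H_*(A,\hat M)$ the paper simply cites Brown--Dror, whereas you give a self-contained commutative-algebra argument (flatness of $\hat R$ over the Noetherian ring $R=\ZZ/p[A]$, the identification $\hat M\cong M\otimes_R\hat R$ for finitely generated $M$, flat base change for ${\sf Tor}$, and the fact that the homology is killed by the augmentation ideal, so tensoring with $\hat R$ changes nothing); this is a complete and correct replacement for the citation. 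Second, for triviality of the $A\otimes\ZZ_p$-action on $H_q(A,\hat M)$ the paper argues topologically (finite-index subgroups of $A\otimes\ZZ_p$ are open and $A$ is dense), while you use that the quotient $D$ is divisible and hence has no nontrivial finite quotients --- equally valid, and more elementary in that it needs no profinite input. Third, and most notably, the paper asserts that $\ZZ_p/\ZZ$ is divisible and \emph{torsion free}, so that $A\otimes(\ZZ_p/\ZZ)\cong\QQ^{\oplus {\bf c}}$; this assertion is in fact inaccurate, since $\ZZ_p/\ZZ$ has torsion (for odd $p$ the class of $1/2$ is $2$-torsion), its torsion subgroup being $\ZZ_{(p)}/\ZZ$, a direct sum of $\ell$-primary Pr\"ufer groups over primes $\ell\neq p$. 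Your decomposition $D\cong V\oplus T$, with $V$ a $\QQ$-vector space and $T$ prime-to-$p$ torsion, repairs exactly this point: both summands have vanishing mod-$p$ homology in positive degrees, so the K\"unneth argument closes the proof and the conclusion of the lemma is unaffected. In short, your proposal follows the paper's route but is, on this last point, the more careful of the two arguments.
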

\begin{proof}
The first isomorphism is proven in \cite{BrownDror}. Since $\ZZ/p[A]$ in Noetheran, it follows that $H_n(A,\hat M)$ is a finite $\ZZ/p$-vector space for any $n$. Prove the second isomorphism. The action of $A\otimes \ZZ_p$ on $\hat M$ gives an action of $A\otimes \ZZ_p$ on $H_*(A,\hat M)$ such that $A$ acts trivially on $H_*(A,\hat M).$ Then we have a homomorphism from $A\otimes \ZZ/p$ to a finite group of automorphisms of $H_n(A,\hat M),$ whose kernel contains $A.$ Since any subgroup of finite index in $A\otimes\ZZ_p$ is open (see Theorem 4.2.2 of \cite{RibesZalesskii}) and $A$ is dense in $A\otimes \ZZ_p,$ we obtain that the action of $A\otimes \ZZ_p$ on $H_*(C,\hat M)$ is trivial.
 Note that $\ZZ_p/\ZZ$ is a divisible torsion free abelian group, and hence $A\otimes (\ZZ_p/\ZZ)\cong \QQ^{\oplus {\bf c}},$ where ${\bf c}$ is the continuous cardinal.  Then the second page of the spectral sequence of the short exact sequence $ A\mono A\otimes \ZZ_p \epi \QQ^{\oplus {\bf c}}$ with coefficients in $\hat M$ is $H_n(\QQ^{\oplus {\bf c}},H_m(A,\hat M)),$ where $L_m:=H_m(A,\hat M) $ is a trivial $\ZZ/p[\QQ^{\oplus {\bf c}}]$-module. Then by universal coefficient theorem we have
 $$0\longrightarrow \Lambda^n(\QQ^{\oplus {\bf c}}) \otimes L_m \longrightarrow H_n(\QQ^{\oplus {\bf c}},L_m) \longrightarrow {\sf Tor}(\Lambda^{n-1}(\QQ^{\oplus {\bf c}}), L_m) \longrightarrow 0.$$ Since $\Lambda^n(\QQ^{\oplus {\bf c}})$ is torsion free and $L_m$ is a $\ZZ/p$-vector space, we get
 $\Lambda^n(\QQ^{\oplus {\bf c}}) \otimes L_m=0$ and ${\sf Tor}(\Lambda^{n-1}(\QQ^{\oplus {\bf c}}), L_m)=0.$ It follows that $H_n(\QQ^{\oplus {\bf c}},L_m)=0$ for $n\geq 1$ and $H_0(\QQ^{\oplus {\bf c}},L_m)=L_m.$ Then the spectral sequence consists only on of one column, and hence $H_*(A\otimes \ZZ_p,\hat M)=H_*(A,\hat M).$
\end{proof}

\begin{Lemma}\label{lemma_tensor_coinvariants} Let $A$ be an abelian group, $M$ be a $\ZZ[A]$-module and $\sigma_M:M\to M$ be an automorphism of the underlying abelian group such that $\sigma_M(ma)=\sigma_M(m)a^{-1}$ for any $m\in M$ and $a\in A.$ Then there is an isomorphism $$(M\otimes M)_A\cong M\otimes_{\ZZ[A]} M$$
given by $$m\otimes m' \leftrightarrow m\otimes \sigma_M(m').$$
\end{Lemma}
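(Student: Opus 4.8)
The plan is to realize both sides as quotients of the $\ZZ$-module $M\otimes_\ZZ M$ by explicit submodules of relations, and to exhibit the stated correspondence as the descent of the $\ZZ$-linear automorphism $1\otimes\sigma_M$ of $M\otimes_\ZZ M$.

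First I would fix conventions. The group $A$ acts diagonally on $M\otimes_\ZZ M$, so that
$$(M\otimes M)_A = (M\otimes_\ZZ M)\big/\big\langle\, m\otimes m' - ma\otimes m'a \ :\ m,m'\in M,\ a\in A\,\big\rangle,$$
whereas, since $\ZZ[A]$ is commutative and every module over it is a symmetric bimodule,
$$M\otimes_{\ZZ[A]}M = (M\otimes_\ZZ M)\big/\big\langle\, ma\otimes m' - m\otimes m'a \ :\ m,m'\in M,\ a\in A\,\big\rangle.$$

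Next I would consider the $\ZZ$-linear automorphism $1\otimes\sigma_M$ of $M\otimes_\ZZ M$; it is invertible with inverse $1\otimes\sigma_M^{-1}$ because $\sigma_M$ is an automorphism of the underlying abelian group. The heart of the argument is to check that $1\otimes\sigma_M$ carries the first family of relations into the second. Indeed, using $\sigma_M(m'a)=\sigma_M(m')a^{-1}$ and writing $n=\sigma_M(m')$,
$$(1\otimes\sigma_M)(m\otimes m'-ma\otimes m'a)= m\otimes n - ma\otimes na^{-1},$$
and modulo the relations defining $M\otimes_{\ZZ[A]}M$ one has $ma\otimes na^{-1}=m\otimes (na^{-1})a=m\otimes n$, so the image is zero. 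Hence $1\otimes\sigma_M$ descends to a homomorphism $\Phi\colon (M\otimes M)_A\to M\otimes_{\ZZ[A]}M$ given by $m\otimes m'\mapsto m\otimes\sigma_M(m')$.

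Finally I would run the symmetric computation for the inverse. One checks directly that $\sigma_M^{-1}$ satisfies the same twisted equivariance $\sigma_M^{-1}(m'a)=\sigma_M^{-1}(m')a^{-1}$ (apply $\sigma_M^{-1}$ to the defining identity), and that $1\otimes\sigma_M^{-1}$ carries the second family of relations into the first: with $n=\sigma_M^{-1}(m')$,
$$(1\otimes\sigma_M^{-1})(ma\otimes m'-m\otimes m'a)=ma\otimes n-m\otimes na^{-1},$$
which vanishes modulo the coinvariant relations since $m\otimes na^{-1}\equiv(m\otimes na^{-1})a=ma\otimes n$. Thus $1\otimes\sigma_M^{-1}$ descends to $\Psi\colon M\otimes_{\ZZ[A]}M\to(M\otimes M)_A$, and since $1\otimes\sigma_M$ and $1\otimes\sigma_M^{-1}$ are mutually inverse on $M\otimes_\ZZ M$, the induced maps $\Phi$ and $\Psi$ are mutually inverse, giving the asserted isomorphism. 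I do not expect a serious obstacle here: the only point that requires care is that the twist by $\sigma_M$ converts the diagonal-action coinvariant relation into the balanced tensor relation, which is exactly what the hypothesis $\sigma_M(ma)=\sigma_M(m)a^{-1}$ is designed to achieve; everything else is bookkeeping with the two quotient presentations.
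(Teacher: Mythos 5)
Your proof is correct and follows essentially the same route as the paper: both realize the two sides as quotients of $M\otimes_{\ZZ} M$ and use the twist $1\otimes\sigma_M$ to convert the diagonal-coinvariant relations into the balanced-tensor relations (the paper, like you, also notes that the $a\in A$ relations generate the full $\lambda\in\ZZ[A]$ relations by taking linear combinations). The only cosmetic difference is organizational: the paper proves the single equality $\Phi(R)=R'$ of relation subgroups by rewriting generators and using surjectivity of $\sigma_M$, whereas you check that $1\otimes\sigma_M$ and $1\otimes\sigma_M^{-1}$ each descend and give mutually inverse maps --- equivalent bookkeeping for the same argument.
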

\begin{proof}
Consider the isomorphism $\Phi:M\otimes M \to M\otimes M$ given by $\Phi(m\otimes m')=m\otimes \sigma(m').$ The group of coinvariants $(M\otimes M)_A$ is the quotient of $M\otimes M$ by the subgroup $R$ generated by elements $ma\otimes m'a - m\otimes m',$ where $a\in A$ and $m,m'\in M.$ We can write the generators of $R$ in the following form:  $ma\otimes m'$ $-$ $m\otimes m'a^{-1}.$ Then $\Phi(R)$ is generated by  $ma\otimes \sigma_M(m') $ $-$ $m\otimes \sigma_M(m')a.$ Using that $\sigma_M$ is an automorphism, we can rewrite the generators of $R$ as follows: $ma\otimes m' - m\otimes m'a. $ Taking linear combinations of the generators of $\Phi(R)$ we obtain that $\Phi(R)$ is generated by elements $m\lambda\otimes m'-m\otimes m' \lambda,$ where $\lambda\in \ZZ[A].$ Then $(M\otimes M)/\Phi(R)=M\otimes_{\ZZ[A]} M.$
\end{proof}

The group $C=\langle t \rangle$ acts on $\ZZ/p[\![x]\!]$ by multiplication on $1-x.$ As above, we can extend the action of $C$ on $\ZZ[\![x]\!]$ to the action of $C\otimes \ZZ_p$ in a continuous way. The group $$\mathbb Z/p\wr C=\ZZ/p[C] \rtimes C = \langle a,b \mid [b,b^{a^i}]=b^p=1,  \ i\in \ZZ  \rangle$$ is called the lamplighter group. We consider the 'double version' of this group, {\it double lamplighter group}:
$$(\ZZ/p[C]\oplus \ZZ/p[C])  \rtimes C=\langle a,b,c \mid [b,b^{a^i}]=[c,c^{a^i}]=[b,c^{a^i}]=b^p=c^p=1 , \ \  i\in \ZZ \rangle.$$
Its pro-$p$-completion is equal to the semidirect product
$$\DL=(\ZZ/p[\![x]\!] \oplus \ZZ/p[\![x]\!]) \rtimes (C\otimes \ZZ_p),$$
with the described above action of $C\otimes \ZZ_p$ on $\ZZ/p[\![x]\!]$ (see Proposition 4.12 of \cite{IvanovMikhailov}). We call the group $\DL$ {\it double lamplighter pro-$p$-group}.

\begin{Theorem}\label{theorem_DL} The kernel of the comparison homomorphism for the double lamplighter pro-$p$-group
$$\varphi: H_2^{\sf disc}(\DL,\ZZ/p) \longrightarrow  H_2^{\sf cont}(\DL,\ZZ/p)$$
is uncountable.
\end{Theorem}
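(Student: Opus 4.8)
The plan is to run the Lyndon--Hochschild--Serre spectral sequences of the extension
$$\hat M\longrightarrow \DL\longrightarrow C\otimes\ZZ_p,\qquad \hat M:=\ZZ/p[\![x]\!]\oplus\ZZ/p[\![x]\!],$$
in the discrete and in the continuous world at once, and to compare the two via $\varphi$. Because $\hat M$ is a $\ZZ/p$-vector space, the Künneth theorem gives a natural, hence $(C\otimes\ZZ_p)$-equivariant, splitting of $H_2(\hat M,\ZZ/p)$ whose mixed summand is $H_1(\ZZ/p[\![x]\!])\otimes H_1(\ZZ/p[\![x]\!])=\ZZ/p[\![x]\!]\otimes_{\ZZ/p}\ZZ/p[\![x]\!]$; continuously the same holds with the completed tensor product $\ZZ/p[\![x]\!]\hat\otimes_{\ZZ/p}\ZZ/p[\![x]\!]\cong\ZZ/p[\![x,y]\!]$. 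Everything then reduces to following this mixed summand through the corner term $E^2_{0,2}=H_0(C\otimes\ZZ_p,H_2(\hat M,\ZZ/p))$.

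First I would identify the two corner terms. The diagonal action of $C\otimes\ZZ_p$ on $\ZZ/p[\![x]\!]\otimes_{\ZZ/p}\ZZ/p[\![x]\!]$ has coinvariants which, by Lemma \ref{lemma_tensor_coinvariants} applied to $M=\ZZ/p[\![x]\!]$ with the ring involution $\sigma$ fixed by $\sigma(t^\alpha)=t^{-\alpha}$ (so $\sigma(x)=-x/(1-x)$ and $\sigma(ma)=\sigma(m)a^{-1}$), are
$$\bigl(\ZZ/p[\![x]\!]\otimes_{\ZZ/p}\ZZ/p[\![x]\!]\bigr)_{C\otimes\ZZ_p}\cong \ZZ/p[\![x]\!]\otimes_{\ZZ/p[C\otimes\ZZ_p]}\ZZ/p[\![x]\!].$$
Continuously the corresponding coinvariants are $\ZZ/p[\![x,y]\!]/(x+y-xy)\cong\ZZ/p[\![x]\!]$, the relation $x+y-xy=1-(1-x)(1-y)$ recording the diagonal action. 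Crucially, since the substitution $y=-x/(1-x)$ solving $x+y-xy=0$ is exactly $\sigma$, the map between these corner terms induced by $\varphi$ becomes, under the above identifications, precisely the multiplication map of Proposition \ref{proposition_kernel_uncountable}.

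Next I would check that the mixed summand injects into $H_2^{\sf disc}(\DL,\ZZ/p)$. Differentials reaching $E^2_{0,2}$ come only from $E^2_{2,1}=H_2(C\otimes\ZZ_p,\hat M)$ and $E^2_{3,0}=H_3(C\otimes\ZZ_p,\ZZ/p)$. As $\hat M$ is the $I$-adic completion of the free module $\ZZ/p[C]\oplus\ZZ/p[C]$, Lemma \ref{lemma_homology_of_Z_p} gives $H_n(C\otimes\ZZ_p,\hat M)\cong H_n(C,\ZZ/p[C]^{\oplus 2})=0$ for $n\geq 1$, and likewise $H_3(C\otimes\ZZ_p,\ZZ/p)\cong H_3(C,\ZZ/p)=0$; so both differentials vanish and $E^\infty_{0,2}=E^2_{0,2}$ is the bottom filtration step $F_0H_2^{\sf disc}(\DL,\ZZ/p)$, a genuine submodule. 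Continuously the same corner survives since $\mathrm{cd}_p(\ZZ_p)=1$ forces $H^{\sf cont}_{\geq 2}(C\otimes\ZZ_p,-)=0$. Naturality of the two spectral sequences under the comparison maps then yields the commutative square
$$\xymatrix{
\ZZ/p[\![x]\!]\otimes_{\ZZ/p[C\otimes\ZZ_p]}\ZZ/p[\![x]\!]\ar[r]^-{\mu}\ar@{>->}[d]_{\iota}& \ZZ/p[\![x]\!]\ar[d]^{j}\\
H_2^{\sf disc}(\DL,\ZZ/p)\ar[r]^-{\varphi}& H_2^{\sf cont}(\DL,\ZZ/p),
}$$
in which $\iota$ is a monomorphism and $\mu$ is the multiplication map.

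The conclusion is then immediate: by Proposition \ref{proposition_kernel_uncountable} the kernel of $\mu$ is uncountable, and commutativity gives $\varphi(\iota(z))=j(\mu(z))=0$ for every $z\in\ker\mu$, so the injective image $\iota(\ker\mu)$ is an uncountable subgroup of $\ker\varphi$. I expect the main obstacle to be the middle step: identifying the $\varphi$-induced map on the corner terms as \emph{exactly} the multiplication map. This forces one to compute the continuous coinvariants as $\ZZ/p[\![x,y]\!]/(x+y-xy)$ and to match the involution $\sigma$ of Lemma \ref{lemma_tensor_coinvariants} with the substitution $y=-x/(1-x)$ trivialising the diagonal action—the bookkeeping that upgrades the abstract identification of $E^2_{0,2}$ to the concrete map whose kernel Proposition \ref{proposition_kernel_uncountable} controls. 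Exhibiting an actual splitting of $\iota$, as claimed in the introduction, is a refinement; only the injectivity of $\iota$ is needed for the cardinality argument.
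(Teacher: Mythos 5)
Your outline is correct in its essential content and ends at exactly the paper's reduction: split the mixed K\"unneth summand off the corner term $E^2_{0,2}$, use Lemma \ref{lemma_homology_of_Z_p} to kill the rest of the spectral sequence, identify the coinvariants with $\ZZ/p[\![x]\!]\otimes_{\ZZ/p[C\otimes\ZZ_p]}\ZZ/p[\![x]\!]$ via the antipode of Lemma \ref{lemma_tensor_coinvariants}, and invoke Proposition \ref{proposition_kernel_uncountable}. Your remark that only injectivity of $\iota$, not splitness, is needed is also right, and your matching of $\sigma$ with the substitution $y=-x/(1-x)$ does make the relevant square commute. Where you genuinely diverge from the paper is the continuous side: you posit a continuous Lyndon--Hochschild--Serre spectral sequence for the profinite extension, a completed K\"unneth summand $\ZZ/p[\![x,y]\!]\cong\ZZ/p[\![x]\!]\hat\otimes\ZZ/p[\![x]\!]$, continuous coinvariants $\ZZ/p[\![x,y]\!]/(x+y-xy)\cong\ZZ/p[\![x]\!]$ (correct; the ideal is closed since $\ZZ/p[\![x,y]\!]$ is compact Noetherian), and then a morphism from the discrete to the continuous spectral sequence inducing $\varphi$ on abutments. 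The paper never uses any continuous spectral sequence: it takes the fundamental system of open normal subgroups $\DL_{(i)}=((x^i)\oplus(x^i))\rtimes(C\otimes p^i\ZZ_p)$, writes $H_2^{\sf cont}(\DL,\ZZ/p)=\varprojlim H_2(\DL^{(i)},\ZZ/p)$, compares the discrete spectral sequence of $\DL$ with the \emph{discrete} spectral sequences of the finite quotients $\DL^{(i)}$ by ordinary functoriality, uses that $\ZZ/p[x]/(x^i)\otimes_{\ZZ/p[C\otimes\ZZ_p]}\ZZ/p[x]/(x^i)\cong\ZZ/p[x]/(x^i)$ because $\ZZ/p[C\otimes\ZZ_p]\to\ZZ/p[x]/(x^i)$ is surjective, and passes to the inverse limit at the end.

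The trade-off is this. The step you yourself flag as the main obstacle --- ``naturality of the two spectral sequences under the comparison maps'' --- is precisely what is not available off the shelf: $\varphi$ is defined as an inverse limit of coinflation maps, not as a map induced by a morphism of filtered chain complexes, so the existence of a spectral sequence morphism realizing it (and compatible with your completed K\"unneth splitting) is an assertion that still has to be proved. The two honest ways to prove it are (a) to construct the natural map from the discrete bar complex to the completed bar complex and check it induces $\varphi$, or (b) to realize the continuous spectral sequence as $\varprojlim$ of the finite-level discrete ones --- and option (b) is literally the paper's argument, where commutativity is free because all maps come from the group homomorphisms $\DL\to\DL^{(i)}$, and all finite-level terms are finite so the limit causes no difficulties. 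So your route is viable and all the corner-term identifications are correct, but as written it carries a genuine unproven step, and the cheapest patch collapses it back into the paper's proof.
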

\begin{proof} For the sake of simplicity we set $H_2(-)=H_2(-,\ZZ/p)$ and $H_2^{\sf cont}(-)=H_2^{\sf cont}(-,\ZZ/p).$
Consider the homological spectral sequence $E$ of the short exact sequence $\ZZ/p[\![x]\!]^2 \mono \DL \epi C\otimes \ZZ_p.$ Then the zero line of the second page is trivial $E^2_{k,0}=H_k(C\otimes \ZZ_p)=(\Lambda^k \ZZ_p)\otimes \ZZ/p=0$ for $k\geq 2.$ Using Lemma \ref{lemma_homology_of_Z_p} we obtain  $H_k(C\otimes\ZZ_p, \ZZ/p[\![x]\!])=H_k(C,\ZZ/p[C])=0$ for $k\geq 1,$ and hence $E^2_{k,1}=0$ for $k\geq 1.$ It follows that
\begin{equation}\label{eq_h_2_dl}
H_2(\DL)=E^2_{0,2}.
\end{equation}
For any $\ZZ/p$-vector space $V$ K\"unneth formula gives  a natural isomorphism
$$H_2(V\oplus V) \cong  (V\otimes V) \oplus H_2(V)^2.$$
 Then we have a split monomorphism
\begin{equation}\label{eq_mono_to_h2_1}
(\ZZ/p[\![ x ]\!]\otimes \ZZ/p[\![ x ]\!])_{C\otimes \ZZ_p} \mono E^2_{0,2}=H_2(\DL).
\end{equation}

It easy to see that the groups $\DL_{(i)}=((x^i)\oplus (x^i))\rtimes (C \otimes p^{i} \ZZ_p)$ form a fundamental system of open normal subgroups. Consider the quotients $\DL^{(i)}=\DL/\DL_{(i)}.$ Then $$H_2^{\sf cont}(\DL)=\varprojlim H_2(\DL^{(i)}).$$ The short exact sequence $\ZZ/p[\![x]\!]^2 \mono \DL \epi C\otimes \ZZ_p$ maps onto the short exact sequence $(\ZZ/p[x]/(x^i))^2 \mono \DL^{(i)} \epi C/C^{p^{i}}.$ Consider the morphism of corresponding spectral sequences $E\to {}^{(i)}\!E.$ Using \eqref{eq_h_2_dl} we obtain
$${\rm Ker}(H_2(\DL)\to H_2(\DL^{(i)}))\supseteq {\rm Ker}(E^2_{2,0} \to {}^{(i)}\!E^2_{2,0}).$$
Similarly to \eqref{eq_mono_to_h2_1} we have a split monomorphism
$$(\ZZ/p[ x ]/(x^i)\otimes \ZZ/p[ x ]/(x^i) )_{C\otimes \ZZ_p} \mono {}^{(i)}E^2_{2,0}.$$
Then we need to prove that the kernel of the map
\begin{equation}\label{eq_map_of_coinvariants}
(\ZZ/p[\![x]\!]\otimes \ZZ/p[\![x]\!])_{C\otimes \ZZ_p}\longrightarrow \varprojlim (\ZZ/p[x]/(x^i)\otimes \ZZ/p[x]/(x^i))_{C\otimes \ZZ_p}
\end{equation}
is uncountable.

Consider the antipod $\sigma:\ZZ/p[C]\to \ZZ/p[C]$ i.e. the ring homomorphism given by $\sigma(t^n)=t^{-n}.$ The antipod induces a homomorphism $ \sigma:\ZZ/p[x]/(x^{i})\to \ZZ/p[x]/(x^i) $ such that $\sigma(1-x)=1+x+x^2+\dots.$ It induces the continuous homomorphism $ \sigma:\ZZ/p[\![x]\!] \to \ZZ/p[\![x]\!]$  such that $\sigma(x)=-x-x^2-\dots.$ Moreover, we consider the antipode $\sigma$ on $\ZZ/p[C\otimes \ZZ_p].$ Note that the homomorphisms $$\ZZ/p[C]\to\ZZ/p[C\otimes \ZZ_p] \to \ZZ/p[\![x]\!] \to \ZZ/p[x]/(x^i)$$ commute with the antipodes.

By Lemma \ref{lemma_tensor_coinvariants} the  correspondence  $a\otimes b\leftrightarrow a\otimes \sigma(b)$ gives isomorphisms
$$(\ZZ/p[\![x]\!]\otimes \ZZ/p[\![x]\!])_{C\otimes \ZZ_p}\cong \ZZ/p[\![x]\!] \otimes_{\ZZ/p[C\otimes \ZZ_p]} \ZZ/p[\![x]\!], $$
$$(\ZZ/p[x]/(x^i)\otimes \ZZ/p[x]/(x^i))_{C\otimes \ZZ_p}\cong \ZZ/p[x]/(x^i) \otimes_{\ZZ/p[C\otimes \ZZ_p]} \ZZ/p[x]/(x^i).$$
Moreover, since $\ZZ/p[C\otimes \ZZ_p]\to \ZZ/p[x]/(x^i)$ is an epimorphism, we obtain
$$\ZZ/p[x]/(x^i) \otimes_{\ZZ/p[C\otimes \ZZ_p]} \ZZ/p[x]/(x^i)\cong \ZZ/p[x]/(x^i).$$
Therefore the homomorphism \eqref{eq_map_of_coinvariants} is isomorphic to the multiplication homomorphism
$$\ZZ/p[\![x]\!]\otimes_{\ZZ/p[C\otimes \ZZ_p]} \ZZ/p[\![x]\!] \longrightarrow \ZZ/p[\![x]\!],$$
whose kernel is uncountable by Proposition  \ref{proposition_kernel_uncountable}.
\end{proof}

\section{Proof of Main Theorem}

Since the double lamplighter pro-$p$-group is $3$-generated, we have a continuous epimorphism  $\hat F_p\epi \DL,$ where $F$ is the $3$-generated free group. Then the statement of the theorem for three generated free group follows from Proposition \ref{theorem_DL} and Corollary \ref{corollary_main}. Using that the $3$-generated free group is a retract of $k$-generated free group for $k\geq 3$, we obtain the result for $k\geq 3.$ The result for two generated free group follows from Lemma 11.2 of \cite{Bousfield92}.

\section*{Acknowledgement}
The research is supported by the Russian Science Foundation grant
N 16-11-10073.

\end{document}